\documentclass[11pt, reqno]{amsart}

\usepackage[margin=1in]{geometry}
\usepackage{amsmath,amssymb,amsthm,mathtools}
\usepackage{hyperref}
\hypersetup{
    colorlinks=true,
    linkcolor=blue,
    filecolor=blue,
    citecolor=blue,
    urlcolor=cyan}

\newtheorem{theorem}{Theorem}[section]
\newtheorem{remark}[theorem]{Remark}
\newtheorem{corollary}[theorem]{Corollary}
\newtheorem{proposition}[theorem]{Proposition}
\newtheorem{lemma}[theorem]{Lemma}
\theoremstyle{definition}
\newtheorem{definition}[theorem]{Definition}

\newcommand{\R}{\mathbb{R}}
\newcommand{\E}{\mathbb{E}}
\newcommand{\Pp}{\mathbb{P}}
\newcommand{\1}{\mathbf{1}}
\newcommand{\norm}[1]{\left\lVert #1\right\rVert}
\newcommand{\dd}{\,\mathrm{d}}

\newcommand{\eps}{\varepsilon}

\title{Online Beck--Fiala Down to Logarithmic Sparsity}
\author{Dylan J. Altschuler and Konstantin Tikhomirov}

\address{Department of Mathematics, The University of Texas at Austin}
\email{dylan.altschuler@austin.utexas.edu}

\address{Department of Mathematical Sciences, Carnegie Mellon University}
\email{ktikhomi@andrew.cmu.edu}

\date{July 13, 2026}

\begin{document}

\begin{abstract}
The Beck--Fiala conjecture asserts that every matrix $A\in\{0,1\}^{n\times T}$ with at most $d$ nonzero entries in each column
has discrepancy $O(\sqrt d)$. A major breakthrough result of Bansal and Jiang recently established the validity of the conjecture for $d \ge \log(T)^2$. The present article extends the validity of the classical \textit{offline} Beck--Fiala conjecture to $d \ge \log(T)^{1+o(1)}$; moreover, the main thrust of the result is that it is actually obtained by an efficient \textit{online} algorithm that minimizes prefix discrepancy. The result is also essentially optimal, since online prefix discrepancy is known to scale as $\omega(\sqrt{d})$ for $d =o(\log T)$. As an immediate corollary, the open question of online vector balancing in the Spencer setting is also resolved. 

The algorithm is based on a compactly supported Metropolis fixed-point walk, constructed by combining ideas from several recent works on the online Koml\'os problem. The proof was generated in conversation with ChatGPT 5.6 Pro; the authors provided high-level guidance in several rounds of prompting, followed by manual checking and rewriting of the proof.
\end{abstract}

\maketitle

\section{Introduction}

The \textit{offline} Beck--Fiala conjecture asserts that every binary matrix $A \in \{0,1\}^{n \times T}$ with $d$--sparse columns has discrepancy $\mathrm{disc}(A) := \min_{\eps \in \{-1,1\}^T} \|A\eps\|_\infty$ of order $O(\sqrt{d})$. Crucially, the conjectured bound does not depend on the dimensions $n$ and $T$. The classical Beck--Fiala theorem \cite{BeckFiala1981} asserts an upper-bound of order $d$, while a celebrated result of Banaszczyk~\cite{Banaszczyk1998} gives the bound 
$O(\sqrt{d\log T})$, which provides an improvement in the superlogarithmic regime $d \gg \log T$. Recent breakthroughs of Bansal and
Jiang \cite{BansalJiang2026} 
 establish the conjectured $O(\sqrt d)$ bound for
$d \ge \log(T)^2$; additionally, for any sparsity, they show the uniform bound of
\[
    \widetilde O(\sqrt d+\sqrt{\log T})\,,
\]
where polynomial factors of $\log\log T$ are suppressed in the $\widetilde O$ notation. 
This remains the current state of the art. 

In the closely related problem of algorithmic \textit{online} discrepancy minimization, a sequence of hidden vectors $a_1,\dots,a_T$ are fixed ahead of time by an oblivious adversary. Then, the vectors are sequentially revealed to the algorithm, which must then immediately and irrevocably chose signs $\eps_1,\dots, \eps_T$ in order to minimize $\|\sum_{i \le T} \eps_i a_i\|_\infty$. Here, ``online'' refers to the algorithmic restriction that $\eps_t$ is chosen just as a function of $a_1,\dots,a_t$, while ``oblivious'' refers to the fact that the vectors $a_1,\dots,a_T$, although hidden from the algorithm, are fixed ahead of time and hence cannot be adversarially adapted to the algorithms choices. Finally, an algorithm which further guarantees bounds on 
\[
    \max_{t \le T} \left\|\sum_{1 \le s \le t} a_s \eps_s\right\|_\infty 
\]
is said to minimize ``prefix'' discrepancy, as opposed to ``terminal'' discrepancy.

Three canonical regimes provide the main focus of the combinatorial discrepancy literature: the Spencer setting has each vector $a_i$ satisfying $a_i \in [-1,1]^n$; the Beck--Fiala setting has $a_i \in\{0,1\}^n$ and $\|a_i\|_0 \le d$ for a parameter $d := d(n)$; the Koml\'os setting has each vector $a_i \in \mathbb{R}^n$ and $\|a_i\|_2 \le 1$. Roughly, the state of the art for online vector balancing is a result of Kulkarni, Reis, and Rothvoss~\cite{KRR2024}, which 
proves the optimal general Koml\'os bound $O(\sqrt{\log T})$. This bound was very recently made algorithmic by Aden-Ali \cite{AdenAli2026}. Applied directly to the Beck--Fiala setting, this result yields an online prefix discrepancy upper bound of
\begin{equation}\label{eq:direct-online-komlos}
  O\!\left(\sqrt{d \log T}\right)\,.
\end{equation}
This is, to our knowledge, the best available bound for the online Beck--Fiala problem.

\subsection{Results}
The main contribution of this article is an efficient online algorithm that achieves the optimal bound of $O(\sqrt{d})$ prefix discrepancy---which incidentally establishes the \textit{offline} Beck--Fiala conjecture---down to nearly logarithmic values of the sparsity parameter $d$. 

\begin{theorem}[Online Beck--Fiala]
\label{thm:main}
For every fixed $\eta>0$ there are constants $c_\eta,C_\eta>0$ with the following property.  Let $2\leq d\leq n$.
There is a randomized online algorithm such that
for any $T$ and any
fixed sequence $a_1,\ldots,a_T\in[-1,1]^n$ in which every column has
at most $d$ nonzero entries,
\begin{equation}\label{eq:main-tail}
 \Pp\left\{
   \max_{t\leq T}\Big\|\sum_{s\le t}\varepsilon_sa_s\Big\|_\infty
     >C_\eta\sqrt d
 \right\}
 \leq C_\eta\, T
 \exp\left(-\frac{c_\eta\, d}
 {\log^{2+\eta}(e\,d)}\right).
\end{equation}
Consequently, there are constants $C,C',T_0 > 0$ depending only on $\eta$ such that the prefix discrepancy of any such $a_1,\dots,a_T$ is
at most $C'\sqrt{d}$ whenever $T \ge T_0$ and
\begin{equation}\label{eq:main-threshold}
 d\geq C (\log T) (\log\log T)^{2+\eta}.
\end{equation}
\end{theorem}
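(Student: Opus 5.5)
The plan is to build an online sign-selection rule as a Markov chain on the discrepancy vector $w_t=\sum_{s\le t}\eps_s a_s$. The target is a compactly supported stationary-type law $\mu$ on $\R^n$: a product of one-dimensional densities $\rho(w_i)$ on $[-L,L]$ with $L=C\sqrt d$. At each step the rule picks $\eps_t$ so that the law of $w_t$ stays dominated by (a bounded multiple of) $\mu$, as in the fixed-point walks of Aden-Ali and Kulkarni--Reis--Rothvoss. Concretely, given $w=w_{t-1}$ and $a=a_t$, we propose $\eps=\pm1$ with Metropolis probabilities
\[
\Pp(\eps=\pm1)\propto \min\Big\{1,\frac{\mu(w\pm a)}{\mu(w)}\Big\}.
\]
Whenever both moves would leave the support we use a fallback. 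The aim is to show that the marginal of $w_t$ stays within a factor $e^{O(1)}$ of $\mu$ coordinatewise, up to a failure event of probability $\exp(-c\,d/\log^{2+\eta}(ed))$ per step. A union bound over $t\le T$ then gives \eqref{eq:main-tail}.

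\textbf{Step 1 (one-dimensional profile).} Choose $\rho$ on $[-L,L]$ as a smoothed Gaussian of variance $\asymp d$, multiplied by a boundary layer that vanishes at $\pm L$. Then $\log\rho$ has bounded second differences at scale $1$ in the bulk, so $\mu(w\pm a)/\mu(w)=\exp(\pm\langle\nabla\log\mu,a\rangle+O(\|a\|_2^2/d))$. Since $\|a\|_2^2\le d$ in the Beck--Fiala setting, the quadratic error is $O(1)$. This is exactly where sparsity $d$ replaces the Komlós normalization. The drift $\langle\nabla\log\mu,a\rangle$ pushes toward the origin and gives the fixed-point identity $\E_{w\sim\mu}[\mu(w\pm a)/\mu(w)]\approx1$.

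\textbf{Step 2 (approximate fixed point).} We verify that the Metropolis kernel $K_a$ satisfies $\mu K_a\le(1+\delta_a)\mu$ pointwise. Here $\delta_a$ comes only from coordinates of $w$ lying in the boundary layer, where the ratio $\rho(x\pm1)/\rho(x)$ degenerates. Summing $\log(1+\delta_{a_t})$ over $t$ would give a $T$-dependent factor, so instead we stop the process. We define a bad event $B_t$: at least $k$ of the $\le d$ coordinates in $\mathrm{supp}(a_t)$ lie in the outer layer $[L-\ell,L]$. Off $B_t$, the boundary coordinates form a $k/d$ fraction, and a large-deviation estimate on the remaining coordinates keeps the fixed-point error $O(1)$ overall rather than per step.

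\textbf{Step 3 (main obstacle: controlling $B_t$ uniformly).} Under $\mu$, each coordinate lies in the outer layer with probability $p\approx e^{-L\ell/d}$. For a fixed set of $d$ coordinates, a product-measure Chernoff bound gives $\mu(B_t)\le\binom dk p^k$. The difficulty is that the true law is not a product and is only dominated by $\mu$ up to the accumulated factor. To get a tail of the form $\exp(-d/\log^{2+\eta}d)$, I would use a multiscale boundary layer. Take $\log d$ dyadic shells $L-2^j\le|x|\le L-2^{j-1}$, each with its own softened profile. Then run a potential-function argument in the spirit of Bansal--Jiang and Aden-Ali, with potential $\Phi=\sum_i\psi(w_i)$ where $\psi$ grows like $\exp(\lambda_j|w_i|)$ in shell $j$. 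Spreading the $O(1)$ error budget across shells at rate $\propto j^{-1-\eta/2}$, which is summable, forces $\lambda_j\sim\sqrt d/(j^{1+\eta/2}\log d)$. This yields per-step failure probability $\exp(-c\,d/\log^{2+\eta}(ed))$, and I expect the $\log^{2+\eta}$ exponent to be exactly the cost of this summation. I anticipate this step, making the Metropolis rejection consistent with the supermartingale property of $\E\Phi$, to be the technical core.

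\textbf{Step 4 (conclusion).} On the complement of $\bigcup_t B_t$ every $w_t$ stays in the support $[-L,L]^n$, so $\max_t\|w_t\|_\infty\le C_\eta\sqrt d$, which proves \eqref{eq:main-tail}. For the consequence, if $d\ge C\log T(\log\log T)^{2+\eta}$ then $\log d\asymp\log\log T$. Hence $d/\log^{2+\eta}(ed)\ge (C/2^{3})\log T$, and for large $C$ and $T\ge T_0$ the right side of \eqref{eq:main-tail} is below $1$. Some sign sequence therefore achieves prefix discrepancy at most $C'\sqrt d$.
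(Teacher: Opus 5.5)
\textbf{There is a genuine gap, and it is in Step 2; Steps 3--4 depend on it.}

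Your non-lazy rule is $\Pp(\eps=\pm1)=m^\pm(w)/N(w)$, with $m^\pm(w)=\min\{1,\mu(w\pm a)/\mu(w)\}$ and $N=m^++m^-$. It does not fix $\mu$, even in the bulk. Detailed balance gives $\mu(w\mp a)\,m^\pm(w\mp a)=\mu(w)\,m^\mp(w)$, so
\[
 (\mu K_a)(w)=\mu(w)\left[\frac{m^-(w)}{N(w-a)}+\frac{m^+(w)}{N(w+a)}\right].
\]
Take a Gaussian-like bulk of variance $\sigma^2$ and a point with $\langle w,a\rangle=0$. There the bracket equals $2e^{-r/2}/(1+e^{-3r/2})=1+r/4+O(r^2)$, where $r=\norm{a}_2^2/\sigma^2$.

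Since $\sigma\lesssim L=O(\sqrt d)$ while $\norm{a}_2^2$ may equal $d$, $r$ is of order one. The bracket depends on $\langle w,a\rangle/\sigma^2$, which fluctuates at scale $O(1)$ under $\mu$. So the kernel misses $\mu$ by a constant relative amount on a set of constant $\mu$-measure, away from any boundary layer.

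These errors compound over $T$ steps, and stopping on $B_t$ does nothing about them. Your claim that off $B_t$ the error is ``$O(1)$ overall rather than per step'' has no mechanism behind it. You therefore never obtain the time-uniform comparison between the law of $w_t$ and $\mu$ that Step 3 presupposes. Step 3 itself (the shell potential, its supermartingale property, and the claimed origin of $\log^{2+\eta}$) describes what you hope to prove rather than proving it. Separately, for $k\ge2$ the complement of $B_t$ does not ensure that one of $w\pm a$ stays in the support, since two boundary coordinates may require opposite signs. So Step 4 does not follow as stated either.

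\textbf{The missing idea is how to keep exact invariance while outputting genuine signs.} The paper keeps the increments lazy. It normalizes $v_t=a_t/\sqrt d$ with constant $L$ and runs three auxiliary Metropolis walks with $\delta\in\{-1,0,1\}$ and $\Pp(\delta=\pm1)=\frac13\min\{1,f(x\pm v)/f(x)\}$. Each walk preserves $\mu$ exactly at every step against an oblivious adversary. Aden-Ali's three-way coupling then forces $\delta_{t,1}+\delta_{t,2}+\delta_{t,3}\in\{\pm1\}$ whenever all three states are $v_t$-balanced ($p^++p^-\ge\frac13$).

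The prefix discrepancy is $\sum_j(X_{k,j}-X_{0,j})$, which is automatically at most $6L$. Failure requires some state to be unbalanced, so stationarity and a union bound give probability at most $3T\,\mu\{\text{not balanced}\}$. There is no accumulation and no potential function.

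What remains is a purely static estimate under the product law. Balance follows from the curvature condition $D_v(x)\le 2\log 2$. The $\log^{2+\eta}$ loss comes from the bump profile $\exp(-e^{(1-y^2)^{-\beta}})$: its local curvature envelope $G_d$ has tail $\exp(-cx/\log^{2+2/\beta}x)$. Majorization reduces $\sum_i v_i^2G_d(Y_i)$ to an equal-weight average of $d$ i.i.d.\ copies, a truncated Chernoff bound finishes, and taking $\beta\ge 2/\eta$ gives the stated exponent.
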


We collect two immediate consequences of this result in relation to existing conjectures.

\begin{corollary}[Offline Beck--Fiala conjecture]
For any matrix $A \in \{0,1\}^{n \times T}$ with each column having support size of at most $d$, where $d$ satisfies \eqref{eq:main-threshold}, it holds:
\[
    \mathrm{disc}(A) = O_\eta\big(\sqrt{d}\big)\,.
\]
\end{corollary}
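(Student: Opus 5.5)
The plan is to derive the corollary directly from Theorem~\ref{thm:main} via the probabilistic method; no new combinatorial input is needed.

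\textbf{Setup.} Let $a_1,\dots,a_T\in\{0,1\}^n$ be the columns of $A$. Since $\{0,1\}\subset[-1,1]$ and each column has at most $d$ nonzero entries, the sequence meets the hypotheses of Theorem~\ref{thm:main}. We may assume $2\le d\le n$:
\begin{itemize}
\item If $d>n$, replace $d$ by $n$. The columns are then trivially $n$-sparse, and $\sqrt n\le\sqrt d$, so a bound in terms of $n$ is even better.
\item If $d<2$, the threshold \eqref{eq:main-threshold} already fails once $T\ge T_0$.
\end{itemize}

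\textbf{Main step.} The oblivious adversary may simply fix the columns of $A$ in order and present them to the randomized online algorithm. The second part of Theorem~\ref{thm:main} applies whenever $T\ge T_0$ and \eqref{eq:main-threshold} holds. It says the prefix discrepancy is at most $C'\sqrt d$, meaning that with positive probability the algorithm outputs signs $\eps\in\{-1,1\}^T$ with
\[
\max_{t\le T}\Big\|\sum_{s\le t}\eps_s a_s\Big\|_\infty\le C'\sqrt d.
\]
If one wants to check this from \eqref{eq:main-tail} directly, take the constant $C$ in \eqref{eq:main-threshold} large enough that $d\ge C\log T(\log\log T)^{2+\eta}$ forces
\[
\frac{c_\eta d}{\log^{2+\eta}(ed)}\ge 2\log T+\log C_\eta .
\]
This holds because $d\ge \log T$ gives $\log(ed)\gtrsim\log\log T$, and $d\lesssim T$ may be assumed (otherwise the trivial bound $\|A\eps\|_\infty\le T\le\sqrt d$ applies). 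With this choice, the right side of \eqref{eq:main-tail} is at most $1/T<1$. Some realization of the random signs therefore satisfies the bound, and taking $t=T$ gives $\mathrm{disc}(A)\le\|A\eps\|_\infty\le C'\sqrt d$.

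\textbf{Small $T$.} It remains to handle $T<T_0$, where $T_0$ depends only on $\eta$. Here any signing gives $\|A\eps\|_\infty\le\min(T,d)\le T_0^{1/2}\sqrt d$, since $\min(T,d)\le\sqrt{Td}$. Taking the implied constant to be $\max(C',T_0^{1/2})$, which depends only on $\eta$, completes the argument.

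\textbf{Main obstacle.} The only delicate point is the calibration in the main step: checking that $\log^{2+\eta}(ed)$ is of order $(\log\log T)^{2+\eta}$ in the relevant range, so that the failure probability in \eqref{eq:main-tail} beats the union factor $T$. Even this is already packaged in the "Consequently" clause of Theorem~\ref{thm:main}.
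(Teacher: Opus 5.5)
Your main step is correct and is exactly the argument behind the paper's corollary, which the paper does not write out: feed the columns of $A$ to the online algorithm of Theorem~\ref{thm:main}, note that the failure probability is below one, and take the prefix $t=T$ to get a signing with $\|A\eps\|_\infty\le C'\sqrt d$. Three of your auxiliary steps are wrong as written, though each is easily repaired and none affects that main step.

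\textbf{Calibration.} Your optional check argues in the wrong direction. The lower bound $\log(ed)\gtrsim\log\log T$ makes $d/\log^{2+\eta}(ed)$ \emph{smaller}. The assumption $d\lesssim T$ only yields $\log(ed)\lesssim\log T$, which would require $d\gtrsim\log^{3+\eta}T$. The correct route uses that $x\mapsto x/\log^{2+\eta}(ex)$ is increasing for $x\ge e^{1+\eta}$. It therefore suffices to evaluate at $d_0=C\log T(\log\log T)^{2+\eta}$. There $\log(ed_0)\le2\log\log T$ for large $T$, giving $d_0/\log^{2+\eta}(ed_0)\ge2^{-2-\eta}C\log T$. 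Separately, the trivial bound $T\le\sqrt d$ requires $d\ge T^2$, not merely $d\gtrsim T$.

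\textbf{The case $d>n$.} Replacing $d$ by $n$ needs $n$ itself to satisfy \eqref{eq:main-threshold}, which fails e.g.\ for $n=3$ and large $T$. Instead, append $d-n$ zero rows. This keeps the columns $d$-sparse and leaves $\|A\eps\|_\infty$ unchanged.

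\textbf{Small $T$.} For $T<T_0$, an arbitrary signing is controlled only by the row sums, i.e.\ by $T$, not by $d$. Column sparsity says nothing about rows: a single all-ones row with $d=1$ gives $\|A\eps\|_\infty=T$ for $\eps=(1,\dots,1)$. The conclusion still holds, since $T<T_0\le T_0\sqrt d$ whenever $A\neq0$. So the implied constant should be $T_0$ rather than $T_0^{1/2}$. Alternatively, the Beck--Fiala bound $2d-1$ recovers a signing with discrepancy at most $\min(T,2d-1)\le\sqrt{2T_0d}$.
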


Next, it follows that online prefix discrepancy in the Spencer setting is bounded by $\sqrt{n}$. This answers Conjecture 1 of \cite{KRR2024} affirmatively and in strong form. Previously, such bounds were only known in the ``average case'' setting where the incoming vectors are sampled uniformly at random from $\{-1,+1\}^n$ \cite{BS2019}.

\begin{corollary}[Online prefix Spencer]
\label{cor:online-spencer}
For every $\eta>0$ there are constants $c_\eta,C_\eta>0$ and
$n_0(\eta)$ such that the following holds.  If $n\geq n_0(\eta)$ and
\[
  T\leq
  \exp\left(\frac{c_\eta n}{\log^{2+\eta}(en)}\right),
\]
then there is a randomized online algorithm such that, for every fixed
sequence $a_1,\ldots,a_T\in[-1,1]^n$,
\[
 \Pp\left\{
  \max_{t\leq T}
  \left\|\sum_{s=1}^t\varepsilon_sa_s\right\|_\infty
  \leq C_\eta\sqrt n
 \right\}
 \geq
 1-\exp\left[-\frac{c_\eta n}{\log^{2+\eta}(en)}\right].
\]
In particular, taking $T=n$ gives a positive answer to
Conjecture~1 of \cite{KRR2024}, with the stronger guarantee holding
simultaneously for every prefix.
\end{corollary}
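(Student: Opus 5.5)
We deduce Corollary~\ref{cor:online-spencer} directly from Theorem~\ref{thm:main}, applied with sparsity parameter $d:=n$. Every vector $a_s\in[-1,1]^n$ trivially has at most $n$ nonzero entries, so it satisfies the sparsity hypothesis with $d=n$. The constraint $2\le d\le n$ holds once $n\ge 2$. Theorem~\ref{thm:main} then supplies a randomized online algorithm, which is the one we use, with
\[
\Pp\left\{\max_{t\le T}\Big\|\sum_{s\le t}\varepsilon_s a_s\Big\|_\infty> C_\eta\sqrt n\right\}\le C_\eta\,T\exp\left(-\frac{c_\eta n}{\log^{2+\eta}(en)}\right).
\]

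The only remaining task is to absorb the prefactor $C_\eta T$ into the exponential by shrinking the constant. Let $c,C$ be the constants from \eqref{eq:main-tail}. Set the corollary's constant to $c':=c/3$ and impose $T\le\exp\big(c' n/\log^{2+\eta}(en)\big)$. Since $C\le\exp\big(c' n/\log^{2+\eta}(en)\big)$ as soon as $n\ge n_0(\eta)$, the failure probability is at most
\[
\exp\left(\frac{(2c'-c)\,n}{\log^{2+\eta}(en)}\right)=\exp\left(-\frac{c'n}{\log^{2+\eta}(en)}\right).
\]
Here $n_0(\eta)$ exists because $n/\log^{2+\eta}(en)\to\infty$. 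The success probability is therefore at least $1-\exp\big(-c'n/\log^{2+\eta}(en)\big)$. The discrepancy constant $C_\eta$ in the conclusion is the same one as in Theorem~\ref{thm:main}.

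For the final claim, take $T=n$. The requirement $n\le\exp\big(c'n/\log^{2+\eta}(en)\big)$ holds for large $n$ after possibly enlarging $n_0(\eta)$. This gives an online algorithm with prefix discrepancy $O_\eta(\sqrt n)$ with probability tending to $1$, and in particular with positive probability. That is precisely Conjecture~1 of \cite{KRR2024}, with the bound holding simultaneously for every prefix $t\le T$.

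There is no real obstacle here. The one point needing care is that the theorem is stated for $a_s\in[-1,1]^n$ with sparse supports, not only for $\{0,1\}$ vectors, so dense Spencer-type inputs are genuinely covered by the choice $d=n$.
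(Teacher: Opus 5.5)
Your proposal is correct and is exactly the argument the paper intends: it gives no separate proof and calls the corollary an immediate consequence. You specialize Theorem~\ref{thm:main} to $d=n$, since vectors in $[-1,1]^n$ are trivially $n$-sparse, and then absorb the prefactor $C_\eta T$ into the exponential by shrinking the constant, which your choice $c'=c/3$ together with $n\ge n_0(\eta)$ does correctly.
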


Finally, we observe that the range of $d$ considered in \eqref{eq:main-threshold} is essentially optimal from the perspective of online discrepancy minimization.

\begin{remark}[Kulkarni-Reis-Rothvoss \cite{KRR2024}, Theorem 4] There is a strategy for an oblivious adversary that produces a sequence of vectors $a_1,\dots,a_T \in [-1,1]^2$ such that the prefix discrepancy incurred by any online algorithm is, with probability at least $1-2^{-T^{\Omega(1)}}$, at least $\Omega(\sqrt{\log T})$.   
In particular, no online
algorithm can guarantee $O(\sqrt d)$ prefix discrepancy throughout the
sublogarithmic regime $d=o(\log T)$, even against an oblivious adversary.
\end{remark}

\subsection{Related work}
The interested reader is referred to the survey \cite{BansalICM} for general background on the rich field of discrepancy theory, both online and offline. We now introduce some specific algorithmic developments in the study of the online Koml\'os problem that are closely related to the present article. 

Alweiss, Liu, and Sawhney introduced the
self-balancing walk, obtaining $O(\!\sqrt{\log(nT)})$-sub-Gaussian prefix sums and an $O(\log(nT))$ online Koml\'os discrepancy bound in linear time~\cite{AlweissLiuSawhney2021}.  Liu, Sah, and Sawhney then constructed a Gaussian fixed-point walk whose increments lie in $\{-1,0,1\}$, as well as a variant with increments in $\{-1,1,2\}$; these yield online Banaszczyk-type bounds for partial colorings and relaxed signings~\cite{LiuSahSawhney2022}.  Kulkarni, Reis, and Rothvoss obtained genuine signs and constant-sub-Gaussian prefixes, giving the optimal $O(\sqrt{\log(eT)})$ discrepancy bound, although with exponential running time~\cite{KRR2024}.

Subsequently, a work of Smirnov and Vershynin \cite{SV2026} took a similar approach to \cite{LiuSahSawhney2022} and construct a random walk that can be kept inside of a convex body by rejecting or discarding a controlled fraction of random steps. Aden-Ali showed that three suitably balanced Gaussian fixed-point walks can instead be coupled so that their increments sum to a genuine sign, recovering the optimal Kulkarni--Reis--Rothvoss bound in linear time~\cite{AdenAli2026}.  Our proof combines this triplet-coupling idea with a compact invariant law tailored to sparse arrivals.

\subsection{AI Usage} In view of the above works, the authors prompted ChatGPT 5.6 Pro to solve the online Spencer problem by applying the coupling of~\cite{AdenAli2026} to the bounded random walk of~\cite{SV2026}.  A complete proof for $O(\sqrt n)$ online prefix discrepancy with $T=n$ was produced in a single response.  The choice of the bump density in Section~\ref{sec:bump} was generated by GPT rather than from the authors' initial proof strategy.  Subsequent rounds of prompting extended the proof to the online Beck--Fiala setting and to longer horizons.  The authors manually verified and rewrote the proofs and organized the results into the present manuscript.

\section{The compact triplet framework}\label{sec:framework}

\subsection{A compact-supported walk}\label{subsec: compact walk}

Let $\rho$ be a positive probability density on $(-L,L)$,
to be defined later, extended to be zero
outside the interval. Define
\[
  \mu:=\rho^{\otimes n},
  \qquad
  f(x):=\prod_{i=1}^n\rho(x_i).
\]
Equivalently, $f(x)= e^{-U(x)}$ on $(-L,L)^n$, 
where
$$
U(x)=-\sum_{i=1}^n \log\rho(x_i),\quad x\in (-L,L)^n,
$$
with the convention
$U(x)=+\infty$ outside the cube.  For any
$x\in (-L,L)^n$ and arbitrary $v\in\R^n$, set
\begin{align}
 p_v^+(x)&:=\frac13\min\left\{1,\frac{f(x+v)}{f(x)}\right\},
 \label{eq:p-plus}\\
 p_v^-(x)&:=\frac13\min\left\{1,\frac{f(x-v)}{f(x)}\right\},
 \label{eq:p-minus}\\
 p_v^0(x)&:=1-p_v^+(x)-p_v^-(x).
 \label{eq:p-zero}
\end{align}
Given a non-random sequence of vectors $v_0,v_1,\dots$ in $\R^n$,
we consider $n$--dimensional random walk 
$
X_0,X_1,\dots,
$ where $X_0\sim\mu$ and $X_{t}$
is defined through 
$$
X_t=X_{t-1}+\delta_t\,v_t,
$$
with $\delta_t$ sampled in $\{-1,0,+1\}$ via
$$
\Pp\{\delta_t=s\;|\,X_{t-1}\big\}=p_{v_t}^s(X_{t-1}).
$$
\begin{remark}
Note that the transition probabilities at step $t$
are determined by the input vectors $v_1,\dots,v_t$,
i.e., are computable in an online fashion. 
\end{remark}

\begin{lemma}[Stationarity]\label{lem:stationarity}
For every non-random sequence of vectors $v_0,v_1,\dots$ in $\R^n$,
$X_t\sim\mu$.
\end{lemma}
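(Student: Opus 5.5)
The plan is induction on $t$, showing that each single transition kernel
\[
K_v(x,\cdot)=p_v^+(x)\,\delta_{x+v}+p_v^-(x)\,\delta_{x-v}+p_v^0(x)\,\delta_x
\]
preserves $\mu$ for every fixed $v\in\R^n$. The base case is $X_0\sim\mu$. Since the sequence $v_t$ is non-random, the law of $X_t$ is the law of $X_{t-1}$ pushed through $K_{v_t}$. Invariance of each kernel therefore gives $X_t\sim\mu$ for all $t$.

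First a well-definedness check. If $x+v$ leaves the cube then $f(x+v)=0$, so $p_v^+(x)=0$. The walk therefore stays in $(-L,L)^n$ almost surely, where $f>0$ and the ratios make sense. Also $p_v^\pm\le 1/3$, so $p_v^0\ge 1/3\ge 0$ and the probabilities are legitimate.

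The key step is detailed balance between $x$ and $x+v$:
\[
f(x)\,p_v^+(x)=\tfrac13\min\{f(x),f(x+v)\}=f(x+v)\,p_v^-(x+v).
\]
This identity is symmetric in the two points, and it also holds trivially when one of them lies outside the cube, since both sides are then $0$.

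To conclude, take a bounded test function $g$ and compute $\int g(y)\,K_v\mu(\mathrm dy)$. Expanding gives
\[
\int\big[p_v^+(x)g(x+v)+p_v^-(x)g(x-v)+p_v^0(x)g(x)\big]f(x)\,\mathrm dx .
\]
In the $+$ term substitute $x=y-v$ (Lebesgue measure is translation-invariant) and apply detailed balance. This turns $f(y-v)p_v^+(y-v)$ into $f(y)p_v^-(y)$. Likewise the $-$ term becomes $f(y)p_v^+(y)$. Summing with the $p^0$ term, the weights add to $p^++p^-+p^0=1$, leaving $\int g f$. This is exactly the standard Metropolis invariance argument.

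I expect no real obstacle. The only care needed is with the boundary convention $f=0$ outside the cube, and detailed balance handles that automatically.
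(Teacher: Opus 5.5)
Your proof is correct and follows the same route as the paper. Both argue by induction on $t$, showing that a single step preserves $\mu$ by integrating the transition probabilities against $f$. The paper just asserts the resulting integral identity, while you justify it with the detailed-balance relation $f(x)p_v^+(x)=\tfrac13\min\{f(x),f(x+v)\}=f(x+v)p_v^-(x+v)$, a translation of variables, and the boundary convention $f=0$ outside the cube.
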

\begin{proof}
Assume $X_{t-1}\sim \mu$.
For every measurable subset $S$ of $(-L,L)^n$,
\begin{align*}
\Pp\{X_t\in S\}
&=\int\limits_{(-L,L)^n}
\big(
p_{v_t}^+(x)\,{\bf 1}_{\{x+v_t\in S\}}
+
p_{v_t}^-(x)\,{\bf 1}_{\{x-v_t\in S\}}
+
p_{v_t}^0(x)\,{\bf 1}_{\{x\in S\}}
\big)\,f(x)\,dx\\
&=
\int\limits_{(-L,L)^n}
f(x)\,{\bf 1}_{\{x\in S\}}
\,dx,
\end{align*}
implying the claim.
\end{proof}

\begin{definition}[Balanced state]
A vector $x\in(-L,L)^n$ is \emph{$v$-balanced} if
\begin{equation}\label{eq:balanced}
   p_v^+(x)+p_v^-(x)\geq\frac13.
\end{equation}
\end{definition}

\begin{definition}[Curvature]
For $x$ satisfying both $x + v\in(-L,L)^n$ and $x - v\in(-L,L)^n$, define
\[
  \Delta_\pm(x,v):=U(x\pm v)-U(x),
  \qquad
  D_v(x):=\Delta_+(x,v)+\Delta_-(x,v).
\]
\end{definition}

We will utilize the following convenient sufficient condition for balance. 

\begin{lemma}[Curvature criterion]\label{lem:curvature}
If $x+v\in(-L,L)^n$, $x-v\in(-L,L)^n$, and $D_v(x)\leq2\log2$, then $x$ is $v$-balanced.
\end{lemma}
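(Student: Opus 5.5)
We need to show $\min\{1,e^{-\Delta_+}\}+\min\{1,e^{-\Delta_-}\}\ge 1$, since $p_v^\pm(x)=\frac13\min\{1,f(x\pm v)/f(x)\}=\frac13\min\{1,e^{-\Delta_\pm(x,v)}\}$. Both $x\pm v$ lie in the open cube, where $\rho$ is positive. Hence $U(x\pm v)$ is finite and both $\Delta_\pm$ are real numbers. Write $a=\Delta_+(x,v)$ and $b=\Delta_-(x,v)$, so $a+b=D_v(x)\le 2\log 2$.

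The plan is a short case analysis on the signs of $a$ and $b$.

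\emph{Case 1: one of $a,b$ is nonpositive.} If $a\le0$, then $\min\{1,e^{-a}\}=1$ and the sum is at least $1$. The case $b\le 0$ is symmetric.

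\emph{Case 2: both $a,b>0$.} Here the sum equals $e^{-a}+e^{-b}$. By AM--GM,
\[
e^{-a}+e^{-b}\ge 2e^{-(a+b)/2}\ge 2e^{-\log 2}=1 .
\]
(Alternatively, use convexity of $t\mapsto e^{-t}$.)

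In both cases $p_v^+(x)+p_v^-(x)\ge \frac13$, which is exactly \eqref{eq:balanced}.

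There is no real obstacle here. The only point needing care is checking that $\Delta_\pm$ are finite, and this follows from the hypothesis that $x\pm v\in(-L,L)^n$ together with positivity of $\rho$ on $(-L,L)$.
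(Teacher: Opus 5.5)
Your proof is correct and follows the same argument as the paper. It uses the same case split on the signs of $\Delta_\pm(x,v)$: a nonpositive increment gives $p_v^\pm(x)=\frac13$ directly, and when both are positive, convexity of $t\mapsto e^{-t}$ (equivalently AM--GM) gives $\frac23e^{-D_v(x)/2}\ge\frac13$, with your check that $\Delta_\pm$ are finite making explicit what the paper leaves implicit.
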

\begin{proof}
We have
$$
D_v(x)=(U(x + v)-U(x))+(U(x - v)-U(x)).
$$
If $U(x + v)-U(x) \leq 0$ then $f(x+v)\geq f(x)$, and $p_v^+(x)=1/3$.
Similarly, if $U(x - v)-U(x) \leq 0$ then $f(x-v)\geq f(x)$, and $p_v^-(x)=1/3$.
Finally, if both increments $U(x + v)-U(x)$ and $U(x - v)-U(x)$
are positive then, in view of convexity of $s\to \exp(-s)$,
\[
 p_v^+(x)+p_v^-(x)
 =\frac13\left(e^{-\Delta_+(x,v)}+e^{-\Delta_-(x,v)}\right)
 \geq\frac23e^{-D_v(x)/2}\geq\frac13.
\]
\end{proof}

\subsection{The three-way coupling}

We use the following finite coupling lemma, in the form established by
Aden-Ali~\cite{AdenAli2026}.

\begin{lemma}[Three-way coupling; \cite{AdenAli2026}]\label{lem:coupling}
For $j\in\{1,2,3\}$, let $a_j,b_j$ satisfy
\[
  0\leq a_j,b_j\leq\frac13,
  \qquad
  a_j+b_j\geq\frac13.
\]
There is a coupling $(\delta_1,\delta_2,\delta_3)$
of random variables in $\{-1,0,1\}$ such that
\[
 \Pp\{\delta_j=1\}=a_j,
 \qquad
 \Pp\{\delta_j=-1\}=b_j,
\]
for every $j$, and
\[
   \delta_1+\delta_2+\delta_3\in\{-1,1\}
   \qquad\text{almost surely}.
\]
\end{lemma}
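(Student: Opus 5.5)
The plan is to build the coupling in two stages: first decide \emph{which} coordinates are nonzero, then decide the \emph{signs}. Write $q_j:=a_j+b_j\in[\tfrac13,\tfrac23]$ for the probability that $\delta_j\neq0$, and set $Q:=q_1+q_2+q_3\in[1,2]$. The sum $\delta_1+\delta_2+\delta_3$ lies in $\{-1,1\}$ exactly when two conditions hold. First, the number of nonzero coordinates is odd, i.e.\ equal to $1$ or $3$. Second, when all three are nonzero, they do not all carry the same sign. Both conditions will be enforced by construction.

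\emph{Step 1 (support pattern).} Let the random support be a singleton $\{j\}$ with probability $s_j$, or all of $\{1,2,3\}$ with probability $r$. Matching the marginals requires $q_j=s_j+r$ and $s_1+s_2+s_3+r=1$. These force $r=(Q-1)/2\in[0,\tfrac12]$ and $s_j=q_j-r$. For these to be probabilities we need $s_j\ge0$, i.e.\ $q_k+q_l-1\le q_j$ for $\{j,k,l\}=\{1,2,3\}$. This holds because $q_k+q_l-1\le\tfrac13\le q_j$.

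\emph{Step 2 (signs).}
\begin{itemize}
\item On the singleton event $\{j\}$, give $\delta_j$ the sign $+1$ with probability $\alpha_j$ and $-1$ otherwise.
\item On the full event, draw signs from a joint law with $\Pp(\delta_j=+1)=\beta_j$ that puts no mass on $(+,+,+)$ or $(-,-,-)$. Such a joint law exists as soon as $\beta_1+\beta_2+\beta_3\in[1,2]$. This is the same pattern construction as in Step 1, applied to the set of $+$ coordinates: we need its size to be $1$ or $2$, and we can use a law supported on the sets of size $1$ and $2$.
\end{itemize}
The marginal condition is $a_j=s_j\alpha_j+r\beta_j$. So we need $\beta_j\in[0,1]$ with $r\beta_j\in[a_j-s_j,\,a_j]$, which makes $\alpha_j\in[0,1]$. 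Equivalently, $\beta_j\in I_j:=[\max(0,(a_j-s_j)/r),\,\min(1,a_j/r)]$. The case $r=0$ is trivial, since then there is no full event.

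Each $I_j$ is nonempty, since $(a_j-s_j)/r\le1$ is equivalent to $a_j\le q_j$. It remains to show that $\sum_j\beta_j$ can be placed in $[1,2]$. The achievable sums form an interval $[\ell,u]$, so it suffices to show $u\ge1$ and $\ell\le2$.
\begin{itemize}
\item For $u\ge1$: if some $a_j\ge r$, that term alone contributes $1$. Otherwise $u=\sum_j a_j/r$, and $u\ge1$ is equivalent to $\sum a_j\ge(\sum a_j+\sum b_j-1)/2$. This is in turn equivalent to $\sum_j a_j\ge\sum_j b_j-1$, which holds since $\sum_j b_j\le1$.
\item For $\ell\le2$: apply the same argument after the symmetry $\beta_j\mapsto1-\beta_j$, $a_j\leftrightarrow b_j$. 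This uses $\sum_j a_j\le1$ and $s_j-a_j+r=b_j$.
\end{itemize}

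The main obstacle is exactly this compatibility check in Step 2. One has to make sure the sign marginals can be split between the singleton and full events without violating $[0,1]$ constraints, while still avoiding the monochromatic patterns. The hypotheses $a_j,b_j\le\tfrac13$ and $q_j\ge\tfrac13$ are used precisely in the bounds $s_j\ge0$, $\sum_j a_j\le1$ and $\sum_j b_j\le1$.
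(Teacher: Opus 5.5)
Your proof is correct. The paper gives no argument for this lemma; it imports it as a black box from Aden-Ali. So your proposal is a self-contained proof rather than a variant of one in the text.

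I checked both stages.

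\begin{itemize}
\item The odd-support law puts mass $s_j=q_j-r$ on singletons and $r=(Q-1)/2$ on the full set. It is feasible because $q_k+q_l-1\le\frac13\le q_j$.
\item In the sign stage, feasibility of $\alpha_j,\beta_j\in[0,1]$ reduces to $[\ell,u]\cap[1,2]\neq\varnothing$. The bound $u\ge1$ follows from $\sum_j b_j\le1$. The bound $\ell\le2$ follows from $\sum_j a_j\le1$, via the identity $1-\ell_j=\min\{1,b_j/r\}$.
\item The degenerate cases $r=0$ and $s_j=0$ are covered. When $s_j=0$ one has $I_j=\{a_j/r\}$ and $\alpha_j$ is irrelevant.
\end{itemize}

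One step deserves an actual argument. You need, for $\beta\in[0,1]^3$ with $\sum_j\beta_j\in[1,2]$, a law on sign patterns with $\Pp\{\delta_j=+1\}=\beta_j$ that avoids $(+,+,+)$ and $(-,-,-)$. This is not literally the Step~1 construction, which uses sizes $1$ and $3$ and needed the extra condition $s_j\ge0$.

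The claim is true, and the quickest proof is systematic sampling, the same device as in the proof of Lemma~\ref{lem:majorization}. Place consecutive half-open intervals $J_j$ of lengths $\beta_j$ in $[0,B)$, with $B=\sum_j\beta_j$. Draw $U$ uniform on $[0,1)$, and declare $j$ positive iff $J_j\cap\{U,U+1\}\neq\varnothing$. Since $1\le B\le2$, one or two points of $U+\mathbb{Z}$ fall in $[0,B)$. Each $J_j$ has length at most one, so it catches at most one of them. Hence the number of positive coordinates is $1$ or $2$, and $j$ is chosen with probability $|J_j|=\beta_j$. Equivalently, $[0,1]^3\cap\{1\le\sum_jx_j\le2\}$ is an integral polytope whose vertices are the $0/1$ vectors of weight $1$ or $2$.

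Compared with the bare citation, your route has three advantages. It makes the paper self-contained. It shows exactly where $a_j,b_j\le\frac13$ and $a_j+b_j\ge\frac13$ enter. It also yields an explicit $O(1)$-time sampler for each step of the algorithm in Proposition~\ref{prop:generic}, which is what the efficiency claim relies on.
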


\subsection{The coupled Metropolis walk}
For each choice of invariant density $\rho$, we are able to construct a well-behaved random walk in an online fashion. We now extract the precise corresponding algorithmic result for arbitrary $\rho$, which will later inform our actual choices for $\rho$.

\begin{proposition}\label{prop:generic}
Let $\mathcal V\subseteq\R^n$ be a family of column vectors.  Suppose a
product law $\mu$ supported on $(-L,L)^n$ satisfies
\begin{equation}\label{eq:generic-beta}
  \sup_{v\in\mathcal V}
  \Pp_{X\sim\mu}\{X\text{ is not $v$-balanced}\}
  \leq\theta.
\end{equation}
Then there is a randomized online vector balancing algorithm
such that for every fixed sequence $v_1,\ldots,v_T\in\mathcal V$,
\begin{equation}\label{eq:generic-conclusion}
 \Pp\left\{
  \max_{k\leq T}\norm{\sum_{t \le k}\varepsilon_tv_t}_\infty>6L
 \right\}
 \leq3T\theta.
\end{equation}
\end{proposition}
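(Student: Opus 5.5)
Run three independent copies $X^{(1)},X^{(2)},X^{(3)}$ of the Metropolis walk of Section~\ref{subsec: compact walk}, each started from $\mu$. At step $t$, draw the three increments $\delta^{(j)}_t$ jointly using Lemma~\ref{lem:coupling}, and set $\varepsilon_t:=\delta^{(1)}_t+\delta^{(2)}_t+\delta^{(3)}_t$. Then $\sum_{t\le k}\varepsilon_t v_t=\sum_j (X^{(j)}_k-X^{(j)}_0)$, and since every walk stays in $(-L,L)^n$, each summand has $\ell_\infty$-norm less than $2L$, so the prefix sum has norm less than $6L$.

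There are two gaps to close. The coupling lemma needs $a_j+b_j\ge 1/3$, i.e.\ all three current states must be $v_t$-balanced. Also, the marginal law of each $X^{(j)}$ must stay exactly the Metropolis chain, so that Lemma~\ref{lem:stationarity} applies. I would handle both at once as follows:
\begin{enumerate}
\item If all three states $X^{(j)}_{t-1}$ are $v_t$-balanced, use the coupling of Lemma~\ref{lem:coupling} with $a_j=p^+_{v_t}(X^{(j)}_{t-1})$ and $b_j=p^-_{v_t}(X^{(j)}_{t-1})$; note that $a_j,b_j\le1/3$ automatically. The resulting $\varepsilon_t$ lies in $\{\pm1\}$.
\item Otherwise, declare failure. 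For the analysis, still sample each $\delta^{(j)}_t$ independently from its Metropolis law, and let the algorithm output an arbitrary sign, say $\varepsilon_t=1$.
\end{enumerate}
In either case, conditionally on the past, each $\delta^{(j)}_t$ has law $p^{\cdot}_{v_t}(X^{(j)}_{t-1})$. So each coordinate walk is marginally the chain from Section~\ref{subsec: compact walk} driven by the fixed sequence $v_t$. Because the sequence is fixed by an oblivious adversary and the coupling choices do not alter marginals, Lemma~\ref{lem:stationarity} gives $X^{(j)}_{t-1}\sim\mu$ for all $t$ and $j$. The algorithm is online, since everything at step $t$ depends only on $v_1,\dots,v_t$ and internal randomness.

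Let $E$ be the event that every step falls in case 1. On $E$ the identity $\sum_{t\le k}\varepsilon_tv_t=\sum_j(X^{(j)}_k-X^{(j)}_0)$ holds for every $k$, which gives the $6L$ bound. By a union bound over $t\le T$ and $j\le3$, using $X^{(j)}_{t-1}\sim\mu$ and \eqref{eq:generic-beta},
\[
\Pp(E^c)\le\sum_{t\le T}\sum_{j=1}^3\Pp\{X^{(j)}_{t-1}\text{ not }v_t\text{-balanced}\}\le 3T\theta .
\]

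The main obstacle is the marginal-preservation step. One must check that switching between the coupled and independent samplers, based on the current states, leaves each walk's one-step transition kernel unchanged. This holds because both branches have the correct conditional marginals given the entire joint past, which is enough for the induction in Lemma~\ref{lem:stationarity} applied to each walk separately.
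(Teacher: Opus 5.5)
Your proposal is correct and follows essentially the same route as the paper. Both arguments use three Metropolis walks started from $\mu$, the coupling of Lemma~\ref{lem:coupling} whenever all three states are $v_t$-balanced, independent one-walk sampling with an arbitrary output sign otherwise, stationarity of each marginal walk, and a union bound over the $3T$ balance events. The one difference is cosmetic: the paper sets a persistent failure flag, while you re-check balance at every step. This changes nothing, because the $6L$ bound is only claimed on the event that every step is coupled.
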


\begin{proof}
We construct three auxiliary coupled random walks 
$$
(X_{t,j})_t,\quad j=1,2,3,
$$
with each $(X_{t,j})_t$
defined as in Subsection~\ref{subsec: compact walk};
namely, $X_{0,j}\sim \mu$,
with transition probabilities 
computed via \eqref{eq:p-plus}, 
\eqref{eq:p-minus},
\eqref{eq:p-zero}.
The coupling is produced as follows.
The algorithm maintains a failure flag initially set to false.
At step $t\geq 1$, compute the six
probabilities $p_{v_t}^\pm(X_{t-1,j})$.

As long as the flag is unset and all three previous states $X_{t-1,j}$ are $v_t$-balanced, apply
Lemma~\ref{lem:coupling} to sample the coupled variables $\delta_{t,j}$
with the marginals
$$
\Pp\{\delta_{t,j}=s\;|\,X_{t-1,j}\big\}=p_{v_t}^s(X_{t-1,j}),\quad s=\pm,0,
$$
so that
\[
  \varepsilon_t:=\delta_{t,1}+\delta_{t,2}+\delta_{t,3}\in\{-1,1\},
\]
and set
$$
X_{t,j}=X_{t-1,j}+\delta_{t,j}v_t,\quad j=1,2,3.
$$
If at least one of $X_{t-1,j}$'s is not $v_t$-balanced, set the failure flag, sample the three increments $\delta_{t,j}$
independently according to their one-walk marginals, and output
the corresponding $X_{t,j}$'s and
an arbitrary legal sign $\varepsilon_t$. After the flag has been set, continue to evolve all three states
independently according to their one-walk transition probabilities.

\medskip

Applying Lemma~\ref{lem:stationarity}, we obtain that regardless the choice
of non-random vectors $v_t$, all $X_{t,j}$'s are distributed
according to $\mu$.
A union bound using \eqref{eq:generic-beta} shows that some state is unbalanced at some
time $t\leq T$ with probability at most $3T\theta$.  On the complementary event, the
failure flag is never set and, for every prefix $k$,
\begin{align*}
 \sum_{t=1}^k\varepsilon_tv_t
 =\sum_{j=1}^3\sum_{t=1}^k\delta_{t,j}v_t
 =\sum_{j=1}^3(X_{k,j}-X_{0,j}).
\end{align*}
Every state remains in $(-L,L)^n$, so the last expression has infinity norm at
most $6L$, simultaneously for all $k$.
\end{proof}

\begin{remark}[Obliviousness]\label{rem:oblivious}
The proof uses the obliviousness of the adversary: $v_t$ is fixed independently of the current private state of the algorithm.
For an adaptive adversary, conditioning on earlier output signs can bias the
law of $X_{t-1,j}$, and \eqref{eq:generic-beta} cannot be applied in this form.
\end{remark}

Finally, we conclude with a key result that makes verification of the balance condition tractable. 

\begin{lemma}[Reduction to equal weights via majorization]\label{lem:majorization}
Let $d\geq2$ and $M\geq1$ be integers, let $(Z_i)_{i\geq1}$ be independent
copies of a nonnegative random variable $Z$ with $\E Z^2<\infty$, and let
$w_1,\ldots,w_M$ satisfy
\begin{equation}\label{eq:flat-weights}
  0\leq w_i\leq\frac1d,
  \qquad
  \sum_{i=1}^Mw_i\leq1.
\end{equation}
Define $S_w=\sum_{i=1}^Mw_iZ_i$ and
$\overline Z_d=d^{-1}\sum_{i=1}^dZ_i$.  If $K\geq0$ and
\[
  \Pp\{\overline Z_d>K\}\leq\varepsilon,
\]
then
\begin{equation}\label{eq:majorization-tail}
  \Pp\{S_w>K+1\}
  \leq (\E\overline Z_d^2)^{1/2}\varepsilon^{1/2}.
\end{equation}
\end{lemma}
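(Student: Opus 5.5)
The plan is to compare $S_w$ with $\overline Z_d$ through a convex-order (majorization) inequality applied to the hinge function $\varphi(s):=(s-K)_+$. After that, Markov's inequality and Cauchy--Schwarz finish the argument. Note that $\varphi$ is convex and nondecreasing, and all expectations below are finite because $\E Z^2<\infty$.

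\emph{Step 1: padding the weights.} Since $Z_i\geq0$ and $\varphi$ is nondecreasing, increasing weights can only increase $\E\varphi(S_w)$. So we may append extra indices $M+1,\dots,M'$ with weights in $[0,1/d]$ so that the total weight becomes exactly $1$; this is possible since the deficit $1-\sum_i w_i$ can be split into pieces of size at most $1/d$. Call the padded vector $w'\in[0,1/d]^{M'}$, with $\sum_i w'_i=1$ and $M'\geq d$. Let $u\in\R^{M'}$ be the vector with $d$ entries equal to $1/d$ and the rest $0$. Every vector with entries in $[0,1/d]$ and total sum $1$ is majorized by $u$. Indeed, the sum of the $k$ largest entries of $w'$ is at most $\min(k/d,1)$, which equals the corresponding partial sum of $u$. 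Hence $w'\prec u$.

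\emph{Step 2: Schur-convexity.} Define $\Phi(v):=\E\varphi\bigl(\sum_{i\le M'}v_iZ_i\bigr)$ on $\R^{M'}$. The function $\Phi$ is convex, since $\varphi$ is convex and the argument is linear in $v$. It is also permutation-symmetric, since the $Z_i$ are i.i.d. A symmetric convex function is Schur-convex: write $w'$ as a convex combination of permutations of $u$ (Hardy--Littlewood--P\'olya/Birkhoff) and apply Jensen. Therefore
\[
\E(S_w-K)_+\le\Phi(w')\le\Phi(u)=\E(\overline Z_d-K)_+ .
\]

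\emph{Step 3: conclusion.} Since $\overline Z_d\geq0$ and $K\geq0$, we have $(\overline Z_d-K)_+\le\overline Z_d\1\{\overline Z_d>K\}$. Cauchy--Schwarz then gives
\[
\E(\overline Z_d-K)_+\le(\E\overline Z_d^2)^{1/2}\,\Pp\{\overline Z_d>K\}^{1/2}\le(\E\overline Z_d^2)^{1/2}\varepsilon^{1/2}.
\]
Finally, Markov's inequality yields
\[
\Pp\{S_w>K+1\}\le\Pp\{(S_w-K)_+\geq1\}\le\E(S_w-K)_+,
\]
which combined with Steps 2 and 3 gives \eqref{eq:majorization-tail}.

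The only step needing care is Step 2, the Schur-convexity and majorization, together with the padding. The padding requires monotonicity of $\varphi$ and nonnegativity of $Z$, which is why the hinge function is used rather than an indicator.
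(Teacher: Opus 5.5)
Your proof is correct and follows essentially the same route as the paper: pad the weights to total mass one, bound $\E(S_w-K)_+$ by $\E(\overline Z_d-K)_+$ by writing the padded weight vector as a mixture of uniform weights on $d$-element index sets and applying Jensen with exchangeability, then finish with Markov and Cauchy--Schwarz. The only difference is that you obtain this mixture abstractly from $w'\prec u$ via Hardy--Littlewood--P\'olya and Birkhoff, whereas the paper constructs it explicitly by systematic sampling (the points $U,U+1,\ldots,U+d-1$ placed against the intervals $J_i$ of length $d\widetilde w_i\le 1$), which keeps the argument self-contained.
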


\begin{proof}
Choose $N\geq M$ and extend $(w_1,\ldots,w_M)$ to
$\widetilde w=(\widetilde w_1,\ldots,\widetilde w_N)$ so that
\[
 \widetilde w_i=w_i\quad(1\leq i\leq M),
 \qquad
 0\leq\widetilde w_i\leq\frac1d,
 \qquad
 \sum_{i=1}^N\widetilde w_i=1.
\]
This is done by appending weights of size $1/d$, followed if necessary by one
smaller weight.  Necessarily $N\geq d$.  Since $Z_i\geq0$,
\[
 S_w\leq\widetilde S_w:=\sum_{i=1}^N\widetilde w_iZ_i.
\]

Let $p_i=d\widetilde w_i$, and let $s_i=\sum_{j\leq i}p_j$ with $s_0=0$.
The intervals $J_i=[s_{i-1},s_i)$ partition $[0,d)$ and have length at most
one.  If $U$ is uniform on $[0,1)$ and independent of the $Z_i$, let
\[
 I(U)=\{i:J_i\cap\{U,U+1,\ldots,U+d-1\}\ne\varnothing\}.
\]
The displayed $d$ points lie in distinct intervals, so $|I(U)|=d$.  Moreover,
$\Pp_U\{i\in I(U)\}=|J_i|=p_i$.  Thus, conditionally on the $Z_i$,
\[
 \widetilde S_w
 =\E_U\left[\frac1d\sum_{i\in I(U)}Z_i\right].
\]
For the increasing convex function $\phi(x)=(x-K)_+$, conditional Jensen and
exchangeability give
\[
 \E\phi(S_w)\leq\E\phi(\widetilde S_w)
 \leq\E\phi\left(\frac1d\sum_{i\in I(U)}Z_i\right)
 =\E\phi(\overline Z_d).
\]
On $\{S_w>K+1\}$ the left-hand integrand is at least one.  On the other hand,
Cauchy--Schwarz gives
\[
 \E(\overline Z_d-K)_+
 \leq (\E\overline Z_d^2)^{1/2}
       \Pp\{\overline Z_d>K\}^{1/2}.
\]
Combining the inequalities proves \eqref{eq:majorization-tail}.
\end{proof}

\section{Proof of the main theorem: Bump-Law stationary measure}\label{sec:bump}

We now construct the stationary measure. Fix $\beta>0$.  For $-1<y<1$ define
\begin{equation}\label{eq:bump-potential-main}
 S(y):=1-y^2,
 \qquad
 A(y):=S(y)^{-\beta},
 \qquad
 \Phi_\beta(y):=e^{A(y)},
\end{equation}
and let $Y$ be a random variable with density
\begin{equation}\label{eq:bump-density-main}
 h_\beta(y):=Z_\beta^{-1}e^{-\Phi_\beta(y)}\1\{|y|<1\}.
\end{equation}
Further define
\begin{equation}\label{eq:bump-q-main}
 q=q_\beta:=2+\frac2\beta,
 \qquad
 \ell(x):=\log^q(ex),\quad x\geq1.
\end{equation}
The product law is obtained by scaling $Y$ by a constant support
radius $L=L_\beta$ chosen below.

\subsection{A scale-sensitive curvature envelope}

Direct differentiation gives for every $y\in(-1,1)$:
\begin{equation}\label{eq:bump-second-main}
0\leq\Phi_\beta''(y)
 \leq C_\beta e^{A(y)}A(y)^q.
\end{equation}
Indeed,
\[
 A'(y)=2\beta yS(y)^{-\beta-1},
 \qquad
A''(y)=2\beta S(y)^{-\beta-1}
       +4\beta(\beta+1)y^2S(y)^{-\beta-2},
\]
and $\Phi_\beta''=e^A((A')^2+A'')$.  Since $A\geq1$,
$S^{-1}=A^{1/\beta}$, and $|y|\leq1$, the displayed formulas give
$(A')^2+A''\leq C_\beta'(A^{2+2/\beta}+A^{1+2/\beta})
\leq C_\beta'' A^q$. Next, for an integer $d\geq2$, define the boundary layer
\[
 B_d:=\big\{y\in(-1,1):1-|y|\leq6d^{-1/2}\big\}.
\]
On the complement of $B_d$ in $(-1,1)$, define the scale-sensitive curvature envelope
\begin{equation}\label{eq:bump-Gd-main}
 G_d(y):=1+\sup_{|u-y|\leq d^{-1/2}}\Phi_\beta''(u),
\end{equation}
whereas on $B_d$ set $G_d(y)=0$.  The boundary layer will be treated as a
separate bad event.

\begin{lemma}[Boundary estimate]\label{lem:bump-boundary}
There are constants $C_\beta,c_\beta>0$ such that
\begin{equation}\label{eq:bump-boundary-tail}
 \Pp\{Y\in B_d\}
 \leq C_\beta\exp\{-\exp(c_\beta d^{\beta/2})\}.
\end{equation}
\end{lemma}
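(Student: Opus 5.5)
We need to bound $\Pp\{1-|Y|\le 6d^{-1/2}\}$ where $Y$ has density $Z_\beta^{-1}e^{-\Phi_\beta(y)}$ on $(-1,1)$. The plan is to use the monotonicity of $\Phi_\beta$ in $|y|$ and compare the density on the boundary layer to a lower bound for the normalizing constant. Since $\Phi_\beta(y)=\exp(S(y)^{-\beta})$ and $S(y)=1-y^2$ decreases in $|y|$, the function $\Phi_\beta$ is even and increasing in $|y|$. Hence the density is maximized at $y=0$ and is decreasing in $|y|$ on $[0,1)$.

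\textbf{Step 1: lower bound on $Z_\beta$.} On $|y|\le 1/2$ we have $S(y)\ge 3/4$, so $\Phi_\beta(y)\le \exp((4/3)^\beta)$. Therefore
\[
Z_\beta\ \ge\ \int_{-1/2}^{1/2}e^{-\Phi_\beta(y)}\dd y\ \ge\ \exp\big(-e^{(4/3)^\beta}\big)=:c_\beta'>0 .
\]

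\textbf{Step 2: pointwise bound on the boundary layer.} For $y\in B_d$ we have $1-|y|\le 6d^{-1/2}$. Since $1+|y|\le 2$, this gives $S(y)=(1-|y|)(1+|y|)\le 12 d^{-1/2}$. Consequently
\[
A(y)=S(y)^{-\beta}\ \ge\ 12^{-\beta}d^{\beta/2},
\qquad
e^{-\Phi_\beta(y)}\le \exp\{-\exp(12^{-\beta}d^{\beta/2})\}.
\]

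\textbf{Step 3: integrate.} The set $B_d$ has Lebesgue measure at most $2\cdot 6d^{-1/2}\le 12$. Combining with Steps 1 and 2,
\[
\Pp\{Y\in B_d\}\ \le\ \frac{12}{c_\beta'}\exp\{-\exp(12^{-\beta}d^{\beta/2})\}.
\]
This is \eqref{eq:bump-boundary-tail} with $c_\beta=12^{-\beta}$ and $C_\beta=12/c_\beta'$.

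When $6d^{-1/2}\ge 1$, i.e.\ $d\le 36$, the layer $B_d$ is all of $(-1,1)$ and the argument above still applies, since the inequality $S(y)\le 12d^{-1/2}$ holds trivially there because $S\le 1\le 12d^{-1/2}$. So no separate case is needed. There is no real obstacle here; the only point requiring care is the monotonicity of the density in $|y|$, which follows immediately from the monotonicity of $S$ in $|y|$.
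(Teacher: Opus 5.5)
Your proposal is correct and follows the paper's argument: you bound $S(y)\le 12d^{-1/2}$ on $B_d$, deduce $\Phi_\beta(y)\ge\exp(12^{-\beta}d^{\beta/2})$, and integrate over the layer. Where the paper simply absorbs $Z_\beta^{-1}$ and the length of the layer into the constant, you make the lower bound on $Z_\beta$ explicit; the monotonicity of the density that you flag is never actually used, since Step 2 is a purely pointwise bound.
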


\begin{proof}
If $1-|y|\leq6d^{-1/2}$, then $S(y)\leq12d^{-1/2}$, and hence
\[
 \Phi_\beta(y)\geq\exp\big(\big(12d^{-1/2}\big)^{-\beta}\big)
                  =\exp(c_\beta d^{\beta/2}).
\]
Integrating \eqref{eq:bump-density-main} over the boundary layer and
absorbing its length and $Z_\beta^{-1}$ into the constant proves the claim.
\end{proof}

\begin{lemma}[Scale-sensitive curvature tail]\label{lem:bump-scale-tail}
There are constants $C_\beta,c_\beta>0$ such that, for every $d\geq2$,
\begin{equation}\label{eq:bump-G-tail}
 \Pp\{G_d(Y)>x\}
 \leq C_\beta\exp\left[-\frac{c_\beta x}{\ell(x)}\right]
 \qquad(1\leq x\leq d).
\end{equation}
Moreover,
\begin{equation}\label{eq:bump-G-second}
 \sup_{d\geq2}\E[ G_d(Y)^2]<\infty.
\end{equation}
\end{lemma}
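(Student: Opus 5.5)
We reduce the tail of $G_d(Y)$ to a tail of the single quantity $A(Y)=S(Y)^{-\beta}$, using the envelope \eqref{eq:bump-second-main} and the double-exponential decay of the density $h_\beta$. Set $s=S(y)$. Outside $B_d$ we have $1-|y|>6d^{-1/2}$. Hence for $|u-y|\le d^{-1/2}$ we get $1-|u|\ge 1-|y|-d^{-1/2}\ge \tfrac56(1-|y|)$. Since $1-|u|\le S(u)\le 2(1-|u|)$, this gives $S(u)\ge \tfrac{5}{12}S(y)$, so $A(u)\le C A(y)$ with $C=(12/5)^\beta$.

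By \eqref{eq:bump-second-main} and monotonicity of $t\mapsto e^t t^q$, it follows that
\[
G_d(y)\le 1+C_\beta e^{CA(y)}(CA(y))^q \le C'_\beta e^{C A(y)}A(y)^q .
\]
The constant $C>1$ in the exponent is the issue: $e^{CA}$ rather than $e^{A}$ would spoil the match with the density $e^{-e^{A}}$. So the comparison must be sharpened. A window of width $d^{-1/2}$ around a point with $1-|y|\gg d^{-1/2}$ changes $S$ only by a factor $1+O(d^{-1/2}/s)$. This changes $A$ by $O(A\,d^{-1/2}/s)=O(A^{1+1/\beta}d^{-1/2})$.

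In the relevant range $G_d\le x\le d$ we have $A\lesssim \log x\le\log d$. There the additive change in $A$ is $O((\log d)^{1+1/\beta}d^{-1/2})=o(1)$, giving $e^{A(u)}\le e^{A(y)+1}$. More precisely, I will split into two cases.

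\textbf{Case (i): $A(y)\le \kappa\log d$.} This is the main case, and the mean value estimate gives $A(u)\le A(y)+1$. Hence $G_d(y)\le C e^{A(y)}A(y)^q$.

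\textbf{Case (ii): $A(y)>\kappa\log d$.} Here $e^{A}\ge d^\kappa$. For $\kappa$ large, the crude bound shows this event has probability at most $\exp(-e^{A})\le \exp(-d^\kappa)\le \exp(-cx/\ell(x))$ for all $x\le d$, so it can simply be discarded.

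In case (i), the event $\{G_d(Y)>x\}$ forces $e^{A(Y)}A(Y)^q\ge x/C$. Writing $E=e^{A}$, this reads $E\log^q E\gtrsim x$, hence $E\ge c\,x/\log^q(ex)=c\,x/\ell(x)$. The probability is then
\[
\Pp\{\Phi_\beta(Y)\ge c x/\ell(x)\}=Z_\beta^{-1}\int_{\{e^{A}\ge cx/\ell\}} e^{-e^{A(y)}}\,dy\le C_\beta e^{-c x/\ell(x)},
\]
absorbing the length of the interval. The exponent $q=2+2/\beta$ in $\ell$ is exactly the power coming from \eqref{eq:bump-second-main}, which is consistent with the statement.

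For \eqref{eq:bump-G-second}, no $d$-dependence enters. Off $B_d$, the same comparison gives $G_d(y)\le C(1+\sup_{S(u)\ge cS(y)}e^{A(u)}A(u)^q)\le C e^{C A(y)}A(y)^q$, and on $B_d$ we have $G_d=0$. Therefore $\E G_d(Y)^2\le C\int e^{2CA(y)}A(y)^{2q}e^{-e^{A(y)}}dy$. This is finite because $e^{-e^{A}}$ beats any $e^{O(A)}$. The bound is uniform in $d$.

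The main obstacle is the sharp comparison $A(u)\le A(y)+O(1)$ in case (i). It needs $A'(u)\,d^{-1/2}\lesssim A^{1+1/\beta}d^{-1/2}=O(1)$ whenever $A\lesssim\log d$, and this holds comfortably. The point is to choose the threshold $\kappa\log d$ so that this holds while the excluded set still has probability $\le e^{-x}$ for all $x\le d$.
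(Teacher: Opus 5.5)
Your proof is correct and follows essentially the same route as the paper: the mean-value comparison $A(u)\le A(y)+C_\beta d^{-1/2}A(y)^{1+1/\beta}$ off $B_d$, inverting $e^{A}A^{q}\gtrsim x$ to $e^{A}\gtrsim x/\ell(x)$ together with $\Pp\{e^{A(Y)}>s\}\le C_\beta e^{-s}$, and the crude comparison $S(u)\ge cS(y)$ (uniform in $d$) for the second moment. The only difference is organizational. You split explicitly at $A(y)=\kappa\log d$ and discard the large-$A$ event directly via the density tail. The paper instead argues contrapositively that $e^{A(y)}\le a_\beta x/\ell(x)$ with $x\le d$ already forces $A(y)\le\log d$, so no separate case is needed. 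In both arguments, bounded $d$ is absorbed into $C_\beta$.
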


\begin{proof}
We may assume that $d$ and $x$ exceed constants depending on $\beta$; all
remaining cases are absorbed by changing $C_\beta$.

Fix $y\in (-1,1)\setminus B_d$ and $|u-y|\leq d^{-1/2}$. By the Mean Value Theorem, $|A(u)-A(y)|=|A'(\xi)|\,|u-y|$ for some
$\xi$ between $u$ and $y$.
We have 
$S(\xi)\geq S(y)-|S(\xi)-S(y)|
 \geq S(y)-2d^{-1/2}\geq\frac23S(y)$,
because $|\xi-y|\leq d^{-1/2}$, $|\xi+y|\leq2$, and
$S(y)\geq1-|y|>6d^{-1/2}$.  Therefore
\begin{equation}\label{eq:bump-A-comparison}
 A(u)\leq A(y)+C_\beta d^{-1/2}\,S(y)^{-\beta-1}
 =A(y)+C_\beta d^{-1/2}\,A(y)^{1+1/\beta}.
\end{equation}
Suppose that
\begin{equation}\label{eq:bump-T-small}
 e^{A(y)}\leq a_\beta\frac{x}{\ell(x)},
\end{equation}
where $a_\beta>0$ will be chosen small.  Since $x\leq d$, this implies
$A(y)\leq \log x\leq\log d$.  Consequently,
\[
 d^{-1/2}\,A(y)^{1+1/\beta}
 \leq d^{-1/2}(\log d)^{1+1/\beta}.
\]
It follows from \eqref{eq:bump-A-comparison}, after enlarging the lower
constant on $d$, that $e^{A(u)}\leq2e^{A(y)}$ and
$A(u)\leq2A(y)$.  In view of \eqref{eq:bump-second-main} and
\eqref{eq:bump-T-small},
\[
 G_d(y)\leq C_\beta' e^{A(y)}A(y)^q
 \leq C_\beta'' a_\beta x.
\]
Thus, choosing $a_\beta$ sufficiently small showed that
\begin{equation}\label{eq:the last implication}
 G_d(y)>x\quad\mbox{ implies }\quad
 e^{A(y)}>a_\beta\frac{x}{\ell(x)}.
\end{equation}
Further, the definition of the density gives
\begin{equation}\label{eq:bump-T-tail-main}
 \Pp\{e^{A(Y)}>s\}\leq C_\beta e^{-s}.
\end{equation}
Indeed,
\[
 \Pp\{e^{A(Y)}>s\}
 =\frac1{Z_\beta}\int_{\{y:\Phi_\beta(y)>s\}}
      e^{-\Phi_\beta(y)}\,dy
 \leq\frac{2}{Z_\beta}e^{-s}.
\]
Therefore, \eqref{eq:the last implication} and \eqref{eq:bump-T-tail-main} prove
\eqref{eq:bump-G-tail}.

For the second moment, if $y\in (-1,1)\setminus B_d$ and $|u-y|\leq d^{-1/2}$, then
$S(u)\geq S(y)/2$, whence $A(u)\leq2^\beta A(y)$.  By
\eqref{eq:bump-second-main},
\[
 G_d(y)^2\leq C_\beta\,
 e^{2^{\beta+1}A(y)}A(y)^{2q}.
\]
Writing $T(y)=e^{A(y)}$, the non-constant term on the right is a fixed polynomial
in $T(y)$ and $\log T(y)$.
Its expectation under the density proportional to
$e^{-T(y)}$ is bounded by a constant depending only on $\beta$.
This proves
\eqref{eq:bump-G-second}.
\end{proof}

\subsection{Weighted scale-sensitive concentration}

\begin{lemma}[Almost-exponential average tail]\label{lem:bump-average}
Let $Z_1,\ldots,Z_d$ be independent copies of $G_d(Y)$.  There are
$K_\beta,C_\beta,c_\beta>0$ such that
\begin{equation}\label{eq:bump-average-tail}
 \Pp\left\{\frac1d\sum_{j=1}^dZ_j>K_\beta\right\}
 \leq C_\beta
 \exp\left[-\frac{c_\beta d}{\ell(d)}\right].
\end{equation}
\end{lemma}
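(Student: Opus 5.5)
We prove the bound by truncating the summands at level $d$ and then applying a Chernoff estimate to the truncated sum. The tail estimate \eqref{eq:bump-G-tail} holds only for $1\le x\le d$, so the large values of $Z_j=G_d(Y_j)$ are handled separately.

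\textbf{Step 1 (large values).} Beyond level $d$ we do not need a sharp tail. By \eqref{eq:bump-G-tail} at $x=d$,
\[
\Pp\{Z_j>d\}\le C_\beta e^{-c_\beta d/\ell(d)},
\]
and a union bound over $j\le d$ costs only a factor $d$. This factor is absorbed by slightly decreasing $c_\beta$, since $d/\ell(d)\gg\log d$ for $d$ large; small $d$ is absorbed into $C_\beta$. On the complementary event, every $Z_j\le d$, so $Z_j=Z_j':=\min(Z_j,d)$.

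\textbf{Step 2 (moment generating function of the truncated variable).} Set $\lambda:=c\,\ell(d)^{-1}$ with $c$ small depending on $\beta$. We want a bound on $\E e^{\lambda Z_1'}$ uniform in $d$. Write
\[
\E e^{\lambda Z'}\le e^{\lambda}+\int_1^{d}\lambda e^{\lambda x}\,\Pp\{Z>x\}\,dx .
\]
For $1\le x\le d$ we have $\ell(x)\le\ell(d)$. Using \eqref{eq:bump-G-tail}, the integrand is therefore at most
\[
C_\beta\lambda\exp\!\big(\lambda x-c_\beta x/\ell(x)\big)\le C_\beta\lambda\exp\!\big(-(c_\beta-c)x/\ell(d)\big)
\]
once $c<c_\beta/2$. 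Integrating gives
\[
\int_1^d(\cdots)\,dx\le C_\beta\lambda\,\frac{\ell(d)}{c_\beta-c}\le C'_\beta .
\]
Hence $\E e^{\lambda Z'}\le e^{\lambda}+C'_\beta=:M_\beta$, a bound uniform in $d$.

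\textbf{Step 3 (Chernoff).} By independence and Markov's inequality,
\[
\Pp\Big\{\sum_j Z_j'>K d\Big\}\le M_\beta^{\,d}\,e^{-\lambda K d}=\exp\!\Big(-d\big(\lambda K-\log M_\beta\big)\Big).
\]
The main obstacle sits here. Because $\lambda\asymp 1/\ell(d)$, the term $\lambda K$ tends to zero, so a fixed $K$ cannot beat $\log M_\beta$ directly.

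The remedy is to sharpen Step 2 to $\E e^{\lambda Z'}\le 1+C\lambda$. This holds because the integral bound above is in fact $O(\lambda\,\ell(d))\cdot\lambda$-scaled in the right way; more cleanly, one uses $e^{\lambda z}\le 1+\lambda z e^{\lambda z}$ together with the integrated tail. That yields
\[
\E e^{\lambda Z'}\le 1+\lambda\,\E\big[Z'e^{\lambda Z'}\big]\le 1+\lambda C''_\beta,
\]
where $C''_\beta$ is uniform in $d$, again by \eqref{eq:bump-G-tail}: the integrand $x\,e^{\lambda x}\,\Pp\{Z>x\}$ decays like $x\exp(-c'x/\ell(x))$, which is integrable independently of $d$.

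With this sharper bound, $\log\E e^{\lambda Z'}\le C''_\beta\lambda$. Choosing $K_\beta=2C''_\beta$ gives
\[
\Pp\Big\{\sum_j Z_j'>K_\beta d\Big\}\le e^{-\lambda C''_\beta d}=e^{-c\,C''_\beta d/\ell(d)} .
\]
Combining this with Step 1 yields \eqref{eq:bump-average-tail}.
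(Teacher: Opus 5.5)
Your argument is correct and is essentially the paper's proof: union-bound the event $\max_j Z_j>d$, take $\lambda=c/\ell(d)$, and use $\ell(x)\le\ell(d)$ in the form $\lambda x\le c\,x/\ell(x)$ to get $\E e^{\lambda\min\{Z,d\}}\le 1+C\lambda\le e^{C\lambda}$ uniformly in $d$, then finish with Chernoff at $K_\beta=2C$. The paper gets $1+C\lambda$ directly from $\E e^{\lambda Z'}=1+\lambda\int_0^d e^{\lambda x}\Pp\{Z>x\}\dd x$; your Step 2 only lost this because you weakened the decay $c_\beta x/\ell(x)$ to $c_\beta x/\ell(d)$ instead of bounding $\lambda x$ by $c\,x/\ell(x)$, so the detour through $e^{\lambda z}\le 1+\lambda z e^{\lambda z}$ and the garbled sentence before it can be dropped.
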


\begin{proof}
By Lemma~\ref{lem:bump-scale-tail} and a union bound,
\begin{equation}\label{eq:bump-max-tail}
 \Pp\left\{\max_{j\leq d}Z_j>d\right\}
 \leq C_\beta d\exp[-c_\beta d/\ell(d)]
 \leq C_\beta'\exp[-c'_\beta d/\ell(d)].
\end{equation}
Put $\lambda=c_0/\ell(d)$, where $c_0>0$ is sufficiently small.  The
tail-integration identity and \eqref{eq:bump-G-tail} yield
\begin{align*}
 \E e^{\lambda\min\{Z_j,d\}}
 &=1+\lambda\int_0^d e^{\lambda x}\Pp\{Z_j>x\}\dd x\\
 &\leq1+C_\beta\lambda
 \left(1+\int_1^\infty
 \exp\left[-\frac{c_\beta x}{2\ell(x)}\right]\dd x\right)\\
 &\leq\exp(C'_\beta\lambda).
\end{align*}
Here we used $\ell(x)\leq\ell(d)$ for $1\leq x\leq d$, so that
$\lambda x\leq c_0x/\ell(x)$. Chernoff's
bound gives
\[
 \Pp\left\{\sum_{j=1}^d\min\{Z_j,d\}>K_\beta d\right\}
 \leq\exp[-(K_\beta-C'_\beta)\lambda d].
\]
Choose $K_\beta>2C'_\beta$ and combine this with
\eqref{eq:bump-max-tail}.
\end{proof}

\begin{proposition}[Weighted scale-sensitive curvature tail]
\label{prop:weighted-bump}
There are constants $K'_\beta,C_\beta,c_\beta>0$ such that, for every
$M\geq1$, every $w_1,\ldots,w_M$ satisfying \eqref{eq:flat-weights}, and
independent copies $Y_1,\ldots,Y_M$ of $Y$,
\begin{equation}\label{eq:weighted-bump-tail}
 \Pp\left\{\sum_{i=1}^M w_iG_d(Y_i)>K'_\beta\right\}
 \leq C_\beta
 \exp\left[-\frac{c_\beta d}{\ell(d)}\right].
\end{equation}
\end{proposition}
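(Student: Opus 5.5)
This follows by combining Lemma~\ref{lem:majorization} with Lemma~\ref{lem:bump-average} and the second-moment bound \eqref{eq:bump-G-second}.

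\textbf{Setup.} Take $Z:=G_d(Y)$. This variable is nonnegative, since $G_d\ge 1$ off $B_d$ and $G_d=0$ on $B_d$. Its second moment is finite and bounded uniformly in $d$ by \eqref{eq:bump-G-second}. Let $Z_i:=G_d(Y_i)$ for the independent copies $Y_1,\dots,Y_M$; the $Z_i$ are then independent copies of $Z$. The weights satisfy \eqref{eq:flat-weights} by hypothesis, so Lemma~\ref{lem:majorization} applies with $K:=K_\beta$, where $K_\beta$ is the constant from Lemma~\ref{lem:bump-average}. That lemma supplies the needed input
\[
\Pp\{\overline Z_d>K_\beta\}\le \varepsilon:=C_\beta\exp\left[-\frac{c_\beta d}{\ell(d)}\right].
\]

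\textbf{Conclusion of the lemma.} Lemma~\ref{lem:majorization} then gives
\[
\Pp\Big\{\sum_i w_iG_d(Y_i)>K_\beta+1\Big\}\le (\E\overline Z_d^2)^{1/2}\varepsilon^{1/2}.
\]

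\textbf{Bounding $\E\overline Z_d^2$.} By convexity (Jensen or Cauchy--Schwarz applied to the average),
\[
\E\overline Z_d^2\le \E Z^2\le \sup_{d'\ge2}\E[G_{d'}(Y)^2]<\infty.
\]
This is a constant depending only on $\beta$.

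\textbf{Final constants.} Set $K'_\beta:=K_\beta+1$. The square root halves the exponent, so we may take $c_\beta/2$ as the new $c_\beta$ and $C_\beta^{1/2}$ times the second-moment constant as the new $C_\beta$. This yields \eqref{eq:weighted-bump-tail}.

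There is no real obstacle here. The one point to check is that the second-moment bound is uniform in $d$, so the prefactor does not grow. This uniformity is exactly what \eqref{eq:bump-G-second} asserts.
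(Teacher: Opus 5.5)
Your proposal is correct and is essentially the paper's own proof: apply Lemma~\ref{lem:majorization} with $Z=G_d(Y)$ and $K=K_\beta$, feed in Lemma~\ref{lem:bump-average}, bound the second-moment prefactor uniformly in $d$ via \eqref{eq:bump-G-second}, and absorb the square root into $c_\beta$ and the additive $1$ into $K'_\beta$. Your explicit Jensen step $\E\overline Z_d^2\le\E Z^2$ spells out what the paper leaves implicit. The lemma formally asks for an infinite i.i.d.\ sequence, and you can supply this by appending further independent copies of $Y$.
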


\begin{proof}
Apply Lemma~\ref{lem:majorization} with $Z=G_d(Y)$ and use
Lemma~\ref{lem:bump-average}.  The second-moment factor is bounded uniformly
in $d$ by \eqref{eq:bump-G-second}.  The square root in
\eqref{eq:majorization-tail} only changes $c_\beta$, and the additive one only
changes the threshold.
\end{proof}

\subsection{Balance and proof of the bump theorem}

\begin{proposition}[Bump-law balance estimate]\label{prop:bump-balance}
For $L=L_\beta$ sufficiently large, the following holds for every $m\geq1$.
Let $Y_1,\ldots,Y_m$ be independent with $Y_i\sim h_\beta$ and put
$X=(LY_1,\ldots,LY_m)$.  If $v\in\R^n$ satisfies $\norm v_2\leq1$ and $\norm v_\infty\leq d^{-1/2}$
then
\begin{equation}\label{eq:bump-balance-prob}
 \Pp\{X\text{ is not $v$-balanced}\}
 \leq C_\beta
 \exp\left[-\frac{c_\beta d}{\log^q(ed)}\right].
\end{equation}
\end{proposition}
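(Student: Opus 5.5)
We reduce $v$-balance to the curvature criterion, Lemma~\ref{lem:curvature}, and then control $D_v(X)$ by a weighted sum of the envelopes $G_d(Y_i)$. This makes Proposition~\ref{prop:weighted-bump} applicable. Here $U(x)=\sum_i\Phi_\beta(x_i/L)+\text{const}$.

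\textbf{Step 1: a bad event and the interior of the cube.} Define the bad event
\[
E:=\{\exists i\in\operatorname{supp}(v):\ Y_i\in B_d\}.
\]
Since $\|v\|_2\le1$ and $|v_i|\le d^{-1/2}$, we do not directly control $|\operatorname{supp}v|$. We therefore split coordinates into those with $|v_i|\ge d^{-1}$ (at most $d^2$ of them) and the rest. The boundary event for the first group costs at most $d^2 C_\beta\exp\{-\exp(c_\beta d^{\beta/2})\}$ by Lemma~\ref{lem:bump-boundary}, which is negligible.

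For small $v_i$ I would use a finer layer instead. One option is to define the envelope with radius $|v_i|/L$, so that the effective scale is $\min(|v_i|,d^{-1/2})$. The simplest version is to note that $G_d$ with $B_d$ shrunk still obeys the same tail bounds. I will argue via weights: for $i\notin B_d$ the envelope is finite. If $Y_i\in B_d$ but $|v_i|$ is tiny, we need a separate bound. I expect to handle this by letting the weighted tail absorb it, through a variant envelope equal to $+\infty$ on the boundary layer, whose probability is doubly exponentially small per coordinate but appears with weight; a union over coordinates is not available uniformly in $n$. This uniformity issue is the main obstacle. The fix I would try first is to replace $B_d$ by a coordinate-dependent layer $1-|y|\le 6|v_i|/L$, whose probability under $h_\beta$ is at most $C\exp(-\exp(c|v_i|^{-\beta}))$. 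Because of the doubly exponential decay, these probabilities sum over $i$ to at most $C\sum_i v_i^2\exp(-c'd^{\beta/2})$, and $\sum_i v_i^2\le1$ makes the total harmless.

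\textbf{Step 2: Taylor bound.} Off the bad event, $x\pm v$ stays inside the cube. For each coordinate, Taylor's theorem with the mean-value form of the second derivative gives
\[
\Phi_\beta(y_i+u)+\Phi_\beta(y_i-u)-2\Phi_\beta(y_i)\le u^2\sup_{|s|\le u}\Phi_\beta''(y_i+s),\qquad u=\frac{v_i}{L}\le d^{-1/2}.
\]
Hence
\[
D_v(X)\le L^{-2}\sum_i v_i^2\,G_d(Y_i).
\]
Set $w_i=v_i^2$. Then $w_i\le 1/d$ and $\sum_i w_i\le1$, which is exactly \eqref{eq:flat-weights}; we take $M=|\operatorname{supp} v|$ (or truncate to finitely many coordinates).

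\textbf{Step 3: conclude.} By Proposition~\ref{prop:weighted-bump}, with probability at least $1-C_\beta\exp[-c_\beta d/\ell(d)]$ we have $\sum_i w_iG_d(Y_i)\le K'_\beta$. Hence $D_v(X)\le K'_\beta/L^2\le 2\log2$ once $L^2\ge K'_\beta/(2\log 2)$, and Lemma~\ref{lem:curvature} gives balance. Combined with the bad-event bound from Step 1, this yields \eqref{eq:bump-balance-prob}, since $\ell(d)=\log^q(ed)$.
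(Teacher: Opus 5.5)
Your skeleton is exactly the paper's: discard a boundary event, bound $D_v(X)\le L^{-2}\sum_i v_i^2G_d(Y_i)$ by the central-difference formula, invoke Proposition~\ref{prop:weighted-bump}, and conclude with Lemma~\ref{lem:curvature} once $L^2\ge K'_\beta/(2\log2)$. You are also right that $\norm v_2\le1$ and $\norm v_\infty\le d^{-1/2}$ do not bound $|\operatorname{supp}v|$. The paper simply asserts that at most $d$ coordinates lie in the support; this holds for $v=a_t/\sqrt d$ in the application but is not among the stated hypotheses.

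However, your workaround breaks Step~2. Once you replace $B_d$ by the finer layer $\{1-|y|\le6|v_i|/L\}$ for small $|v_i|$, a coordinate with $Y_i\in B_d$ but outside its finer layer is no longer discarded. For that coordinate $G_d(Y_i)=0$ by definition, while $\Phi_\beta(Y_i+v_i/L)+\Phi_\beta(Y_i-v_i/L)-2\Phi_\beta(Y_i)>0$. So the inequality $D_v(X)\le L^{-2}\sum_iv_i^2G_d(Y_i)$ fails on this event, and the event is not small: with $N$ coordinates in the support, $\Pp\{\exists i:\,Y_i\in B_d\}=1-(1-\Pp\{Y\in B_d\})^N\to1$ as $N\to\infty$. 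The alternatives you mention (an envelope of radius $|v_i|/L$, or $G_d$ with a shrunken layer) are never carried out. Step~3 then applies Proposition~\ref{prop:weighted-bump}, which is proved only for i.i.d.\ copies of the single variable $G_d(Y)$, so a coordinate-dependent envelope would not plug in directly anyway.

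There are two ways to repair this.
\begin{itemize}
\item \emph{Add sparsity.} Assume $|\operatorname{supp}v|\le d$. Then Step~1 is a union bound over $d$ coordinates and your argument coincides with the paper's.
\item \emph{Keep the fine layers and bound the leftover term separately.} Put $H_i(y)=\sup_{|s|\le|v_i|/L}\Phi_\beta''(y+s)$ and $R=L^{-2}\sum_iw_iH_i(Y_i)\1\{Y_i\in B_d,\ 1-|Y_i|>6|v_i|/L\}$. Off the fine-layer event, $D_v(X)\le L^{-2}\sum_iw_iG_d(Y_i)+R$ (for $L\ge1$). Outside the fine layer $S(y+s)\ge\frac23S(y)$, so $\E[H_i(Y)^2\1\{1-|Y|>6|v_i|/L\}]\le C_\beta$ uniformly in $i$ and $d$, exactly as in the proof of \eqref{eq:bump-G-second}. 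Cauchy--Schwarz and $\sum_iw_i\le1$ give $\E R\le C_\beta\Pp\{Y\in B_d\}^{1/2}$. Markov then gives $R\le\log2$ off a doubly exponentially small event, so $L^2\ge K'_\beta/\log2$ suffices.
\end{itemize}

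Also state the fine-layer union bound in its doubly exponential form: for $d$ large, $\sum_iC\exp(-\exp(c|v_i|^{-\beta}))\le C\sum_iv_i^2\exp(-\tfrac12\exp(cd^{\beta/2}))\le C\exp(-\tfrac12\exp(cd^{\beta/2}))$. The bound $\exp(-c'd^{\beta/2})$ you wrote is not dominated by $\exp(-c_\beta d/\log^q(ed))$ when $\beta<2$.
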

\begin{proof}
In this proof, we use subscript notation $v_i$ 
for components of $v$.
Put $z_i=v_i/L$ and $w_i=v_i^2$ for $1\leq i\leq m$.  Then
$|z_i|\leq d^{-1/2}$, $0\leq w_i\leq d^{-1}$, and
$\sum_{i=1}^m w_i\leq1$. Next, discard the event that $Y_i\in B_d$ for some
$i\in\operatorname{supp}v$.  There are at most $d$ such coordinates, so
Lemma~\ref{lem:bump-boundary} bounds its probability by
\[
 C_\beta d\exp\{-\exp(c_\beta d^{\beta/2})\}
 \leq C_\beta\exp[-c_\beta d/\log^q(ed)].
\]
On its complement all $Y_i\pm z_i$ lie in $(-1,1)$.  The central-difference
formula and \eqref{eq:bump-Gd-main} give
\begin{align*}
 D_v(X)
 &=\sum_{i=1}^m\bigl(\Phi_\beta(Y_i+z_i)+\Phi_\beta(Y_i-z_i)
                       -2\Phi_\beta(Y_i)\bigr) \leq\sum_{i=1}^m z_i^2G_d(Y_i)
 =\frac1{L^2}\sum_{i=1}^m w_iG_d(Y_i).
\end{align*}
Let $K'_\beta$ be the threshold in Proposition~\ref{prop:weighted-bump} and
choose $L^2\geq K'_\beta/(2\log2)$.  Except on the event in
\eqref{eq:weighted-bump-tail}, we then have $D_v(X)\leq2\log2$.
Lemma~\ref{lem:curvature} implies that $X$ is $v$-balanced.  Combining the
two exceptional-event estimates proves the proposition.
\end{proof}

\begin{proof}[Proof of Theorem~\ref{thm:main}]
Normalize $v_t=a_t/\sqrt d$.  Since each $a_t$ is $d$-sparse with entries in
$[-1,1]$, our goal under this normalization is to prove a discrepancy bound of order one.  Apply
Proposition~\ref{prop:generic} with the product law from
Proposition~\ref{prop:bump-balance}.  Its support radius depends only on
$\beta$, so the normalized prefix discrepancy is at most $6L_\beta$ outside
an event of probability
\[
 C_\beta T\exp[-c_\beta d/\log^q(ed)].
\]
Scaling back by $\sqrt d$, choosing $\beta\geq2/\eta$, and adjusting
the constants gives \eqref{eq:main-tail}.
\end{proof}

\end{document}